\newtheorem{theorem}{Theorem}[section]
\newtheorem{lemma}[theorem]{Lemma}
\theoremstyle{definition}
\newtheorem{definition}[theorem]{Definition}
\newtheorem{remark}[theorem]{Remark}
\numberwithin{equation}{section}
\begin{document}

\baselineskip=17pt

\title[A divisor problem for polynomials]{A divisor problem for polynomials}
\author[B. Klahn]{Benjamin Klahn}

\address{Institute of Analysis and Number Theory \\ Technical University of Graz\\
\'Kopernikusgasse 24/II\\
8010 Graz, Austria}
\email{klahn@math.tugraz.at}

\date{}

\begin{abstract}
    We characterize all monic polynomials $f(x) \in \mathbb{Z}[x]$ that have the property that
\[f(p) \mid f(p^{p}),~\text{for all sufficiently large primes }p \geq N(f). \]
We also give necessary conditions and a sufficient condition for monic polynomials $f(x) \in \mathbb{Z}[x]$ to satisfy $f(p) \mid f(p^{p})$ for all primes $p$.
\end{abstract} 

\subjclass[2010]{11A07; 11C08; 11T06}
\keywords{Integer polynomials, divsibility of polynomial values, prime numbers}

\maketitle

\section{Introduction} For integer polynomials $f(x)$ it is a very difficult problem to say something useful about the size and the number of prime divisors of values $f(n)$, $n \in \mathbb{Z}$. It is conjectured that irreducible integer polynomials $f(x)$ without any fixed prime divisor take prime values for infinitely many $n$, and it is even believed that there are infinitely many prime numbers $p$ for which $f(p)$ is prime. This is a well-studied problem and there are deep results towards this conjecture. In \cite{ID82} Deshouillers and Iwaniec proved that there are infinitely many $n$ such that $n
^{2}+1$ has a prime divisor larger than $n^{6/5}$. In \cite{HB00} Heath-Brown proved that for an explicitly given positive $\delta$ the largest prime divisor of $n^{3}+2$ is larger than $X^{1+\delta}$ for a positive proportion of integers $n$ in the interval $[X,2X]$. Later Irving improved this result in \cite{I15} by showing that one can take $\delta = 10^{-52}$. However, it is not known for a single primitive irreducible polynomial of degree at least 2 whether it takes infinitely many prime values.

On the other hand, in \cite{BFMW20} it was shown by Bober, Fretwell, Martin and Wooley that for a fixed quadratic integer polynomial $f(x)$ and a positive $\epsilon$ there are infinitely many integers $n$ such that all prime divisors of $f(n)$ are less than $n^{\epsilon}$.

Thus, it is typically a very difficult problem obtaining information about the prime divsiors of elements of a sequence $(f(n))_{n=m}^{M}$, and initially it therefore came as a surprise to the author that it was possible to relate the prime divsisors of $f(p)$ and $f(p^{p})$.

However, the density theorem of Chebotarëv often comes in very handy as it gives complete information on the density of primes which divide some value of $f(x)$ in terms of its Galois group $\text{Gal}(\text{split}(f(x),\mathbb{Q})/\mathbb{Q})$ where $\text{split}(f(x),\mathbb{Q})$ is the splitting field of $f(x)$. In fact, Chebotarëv's density theorem is one of the key tools in our study of which monic integer polynomials, $f(x)$, satisfy the condition
\begin{equation}
    f(p) \mid f(p^{p})~\text{for all sufficiently large primes}~p \geq N(f). \label{theProblem}
\end{equation}
Related to the problem of determining the polynomials that satisfy (\ref{theProblem}) is the following problem.
\\
~\\
\textit{
    Find all integer polynomials $f(x)$ with nonnegative coefficients such that
    \[\text{rad}(f(n)) \mid f(n^{\text{rad}(n)})\]
    for all nonnegative integers $n$.
}
\\
~\\
Here the \textit{radical}, $\text{rad}(\cdot)$, is defined in the following way.
\begin{definition}
For a nonnegative integer $n$ define $\text{rad}(n) = 1$ if $n = 0$ or $n = 1$, and otherwise $\text{rad}(n) = p_{1}p_{2} \cdots p_{k}$ where $p_{1} < p_{2} < \cdots < p_{k}$ are all the distinct prime factors of $n$.
\end{definition}

The problem above was submitted for the International Mathematical Olympiad in 2012\footnote{A polynomial, $f(x)$, satisfies this condition if and only if $f(x) = ax^{m}$ for some nonnegative integers $a$ and $m$}, see \cite{IMO12}. Altering the formulation of the olympiad problem to only having information at primes makes the problem harder, and the answer to the problem of which monic integer polynomials satisfy (\ref{theProblem}) is richer in the sense that there are more types of polynomials satisfying this condition than the condition of the olympiad problem.

\section{Results}
The main result in this paper is Theorem \ref{main1} which gives a necessary and sufficient condition for (\ref{theProblem}) to be satisfied.
\begin{theorem} \label{main1}
Let $f(x) \in \mathbb{Z}[x]$ be a monic polynomial. There is a positive integer $N:=N(f)$ such that
\begin{equation}
    f(p) \mid f(p^{p}),~\text{for all primes } p \geq N, \label{divisorprop}
\end{equation}
if and only if there are positive integers $d_{1},...,d_{m}$ and nonnegative integers $e_{0},e_{1},...,e_{m}$ such that
\begin{equation}
    f(x)=x^{e_{0}}\Phi_{d_{1}}(x)^{e_{1}}\Phi_{d_{2}}(x)^{e_{2}}\cdots \Phi_{d_{m}}(x)^{e_{m}}, \label{primeFac}
\end{equation}
where $\Phi_{d}(x)$ is the $d$-th cyclotomic polynomial.
\end{theorem}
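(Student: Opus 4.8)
We prove the two implications separately.

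\medskip
\textbf{Sufficiency.} Assume $f$ has the form \eqref{primeFac}. Choose $N=N(f)$ larger than $\max_i d_i$ and large enough that all integer values occurring below are positive. Fix a prime $p\ge N$. For each $i$ we have $p\nmid d_i$, so the elementary cyclotomic identity $\Phi_d(x^q)=\Phi_{dq}(x)\Phi_d(x)$, valid for every prime $q\nmid d$, specializes at $x=p$, $q=p$ to $\Phi_{d_i}(p^p)=\Phi_{d_ip}(p)\,\Phi_{d_i}(p)$; in particular $\Phi_{d_i}(p)^{e_i}\mid \Phi_{d_i}(p^p)^{e_i}$. Since also $p^{e_0}\mid (p^p)^{e_0}$, multiplying these $m+1$ divisibilities and comparing with $f(p^p)=(p^p)^{e_0}\prod_{i=1}^m\Phi_{d_i}(p^p)^{e_i}$ gives $f(p)\mid f(p^p)$. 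This direction is routine, the only input being the cyclotomic identity.

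\medskip
\textbf{Necessity.} Write $f=\prod_j g_j^{e_j}$ with the $g_j$ distinct monic irreducible polynomials in $\mathbb{Z}[x]$. It suffices to show each $g_j$ equals $x$ or some cyclotomic polynomial $\Phi_d$: the only monic irreducible polynomial vanishing at $0$ is $x$, and a monic irreducible polynomial having a root of unity among its roots must be some $\Phi_d$ (since $x^n-1=\prod_{d\mid n}\Phi_d$ and each $\Phi_d$ is irreducible), so a monic polynomial all of whose irreducible factors are of these two kinds is automatically of the shape \eqref{primeFac}. Suppose, for contradiction, that some $g:=g_j$ is neither $x$ nor any $\Phi_d$; fix a root $\alpha$ of $g$, which is then a nonzero algebraic integer that is not a root of unity. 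Put $D=\deg f$ and let $T=\max\{s\ge 1:\varphi(s)\le D\}$, which is finite because $\varphi(s)\le D$ has only finitely many solutions; set $M=\prod_{n=1}^{T}\mathrm{Res}_x\!\big(g(x),\,x^n-1\big)$, a nonzero integer because $g$ shares no root with any $x^n-1$.

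Since $g$ is nonconstant, monic and $g(0)\ne 0$, it has a root modulo infinitely many primes (this is classical; equivalently, by Chebotarëv's theorem for $\mathrm{split}(g(x),\mathbb{Q})$ the trivial Frobenius class always fixes a root). Pick such a prime $\ell\nmid M$ and a root $\bar\alpha\in\mathbb{F}_\ell$ of $g\bmod \ell$. Then $t:=\mathrm{ord}(\bar\alpha)$ in $\mathbb{F}_\ell^\times$ satisfies $t>T$: otherwise $\bar\alpha$ would be a common root of $g$ and $x^t-1$ over $\mathbb{F}_\ell$, whence $\ell\mid \mathrm{Res}_x(g,x^t-1)\mid M$, a contradiction. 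Now $t\mid \ell-1$, so $\gcd(t,\ell)=1$, and for every residue $c$ with $\gcd(c,t)=1$ Dirichlet's theorem provides a prime $p\ge N(f)$ with $p\equiv\bar\alpha\pmod\ell$ and $p\equiv c\pmod t$. For such $p$ we have $\ell\mid g(p)\mid f(p)$, so by \eqref{divisorprop} $\ell\mid f(p^p)=\prod_j g_j(p^p)^{e_j}$, hence $\ell\mid g_j(p^p)$ for some $j$; thus $\overline{p^{\,p}}=\bar\alpha^{\,p}=\bar\alpha^{\,c}$ is a root of $f\bmod\ell$. As $c$ ranges over $(\mathbb{Z}/t\mathbb{Z})^\times$, the element $\bar\alpha^c$ ranges over all $\varphi(t)$ elements of order $t$ in $\mathbb{F}_\ell^\times$, and each is among the at most $D$ roots of $f\bmod\ell$; therefore $\varphi(t)\le D$, contradicting $t>T$. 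Hence no such $g$ exists, and $f$ has the asserted form.

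\medskip
\textbf{Main difficulty.} Sufficiency and the reduction to irreducible factors are straightforward; the real work is the last paragraph. Two points are delicate. First, one must force the $\mathbb{F}_\ell$-root $\bar\alpha$ of $g$ to have large multiplicative order; this is arranged by excluding the finitely many prime divisors of the fixed nonzero integer $M$, a ``a fixed nonzero algebraic integer has few prime divisors'' argument packaged through resultants. Second, one must prescribe $p$ simultaneously modulo $\ell$ and modulo $t$, which is possible precisely because $t\mid \ell-1$. The quantitative engine is the inequality $\varphi(t)\le\deg f$: since $\varphi(t)\to\infty$, it cannot hold for large $t$, and this is what eliminates every non-cyclotomic, non-monomial irreducible factor of $f$.
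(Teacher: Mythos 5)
Your proposal is correct and, in the necessity direction, takes a genuinely different and more elementary route than the paper. The sufficiency half is the same as the paper's: it is exactly Lemma \ref{sufLemma} (1) specialized at $x=p$, $q=p$. For necessity, the paper keeps the radical $g=p_{1}\cdots p_{m}$ intact, uses Chebotar\"ev (Theorem \ref{specialChebotarev}) to get infinitely many primes $q$ modulo which $g$ splits completely, bounds the order of every root of $g$ modulo such $q$ via Dirichlet together with a pigeonhole over auxiliary small primes (Lemma \ref{boundedOrderLemma}), and then concludes the structural statement $g(x)\mid x^{M}-1$ in $\mathbb{Z}[x]$ by the division-with-remainder argument of Lemma \ref{rootimplication}. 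You instead treat one irreducible factor $g$ at a time and aim for a numerical contradiction: the resultants $\mathrm{Res}_x(g,x^{n}-1)$ for $n\le T$ force a root $\bar\alpha$ of $g$ modulo a well-chosen prime $\ell$ to have large multiplicative order $t$, and by prescribing the auxiliary prime $p$ modulo $t$ (rather than modulo $\ell-1$, as the paper in effect does) you show that all $\varphi(t)$ elements of order $t$ in $\mathbb{F}_\ell^{\times}$ are roots of $f\bmod\ell$, whence $\varphi(t)\le\deg f$, contradicting $t>T$. This dispenses with Chebotar\"ev, with complete splitting, with the pigeonhole over small primes, and with Lemma \ref{rootimplication}; the only input beyond Dirichlet is the classical fact that a nonconstant polynomial has roots modulo infinitely many primes. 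What the paper's route buys in exchange is the explicit intermediate conclusion $g(x)\mid x^{M}-1$, which your argument only yields implicitly. One small repair is needed: you set $t=\mathrm{ord}(\bar\alpha)$ in $\mathbb{F}_\ell^{\times}$, but as written $\bar\alpha$ could be $0$ if $\ell\mid g(0)$; since $g(0)\neq 0$, simply also require $\ell\nmid g(0)$ (for instance include the factor $\mathrm{Res}_x(g(x),x)=\pm g(0)$ in $M$), which in addition guarantees that the residue class fed into Dirichlet is invertible modulo $\ell t$.
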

Typically a polynomial of the form (\ref{primeFac}) will not satisfy (\ref{divisorprop}) with $N=2$. Of course there are the trivial solutions $f(x) = x^{m}$ for an arbitrary positive integer $m$, that satisfy (\ref{divisorprop}) with $N=2$. Furthermore, all polynomials of the form $f(x) = x^{m}-1$ for an arbitrary positive integer $m$, satisfy (\ref{divisorprop}) with $N=2$. A less trivial example of such a polynomial is $f(x) = (x
-1)(x+1)(x^{2}+x+1)$. It turns out that a polynomial that satisfies (\ref{divisorprop}) with $N=2$ and is not a power of $x$ has 1 as a root.
\begin{theorem} \label{oneRoot}
Let $f(x) \in \mathbb{Z}[x]$ be a monic polynomial that satisfies (\ref{divisorprop}) with $N=2$. If $f(x)$ is not a power of $x$ then $x-1 = \Phi_{1}(x) \mid f(x)$. Furthermore, a polynomial of the form (\ref{primeFac}) with $d_{1} = 1$ and $d_{j} = p_{j}$ for $j \geq 2$ for distinct primes $p_{j}$ satisfies (\ref{divisorprop}) with $N=2$.
\end{theorem}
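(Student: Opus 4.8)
The plan is to reduce everything, via Theorem~\ref{main1}, to a comparison of $q$-adic valuations. Write $f(x)=x^{e_{0}}\Phi_{d_{1}}(x)^{e_{1}}\cdots\Phi_{d_{m}}(x)^{e_{m}}$ with the $d_{j}$ distinct positive integers and the $e_{j}$ positive; "$f$ is not a power of $x$" means $m\geq 1$. Since $\Phi_{d}(a)\equiv\Phi_{d}(0)=\pm1\pmod a$ for every $d$ and every integer $a$, the factor $x^{e_{0}}$ only contributes the trivial divisibility $p^{e_{0}}\mid p^{pe_{0}}$, so for the first assertion I may assume $e_{0}=0$. The two tools for the bookkeeping are: (i) the cyclotomic identities $\Phi_{d}(x^{p})=\Phi_{dp}(x)$ when $p\mid d$ and $\Phi_{d}(x^{p})=\Phi_{dp}(x)\Phi_{d}(x)$ when $p\nmid d$; and (ii) for a prime $q\nmid a$, the fact that $q\mid\Phi_{d}(a)$ precisely when $d=\operatorname{ord}_{q}(a)\cdot q^{k}$ for some $k\geq0$, with $v_{q}\big(\Phi_{\operatorname{ord}_{q}(a)}(a)\big)=v_{q}\big(a^{\operatorname{ord}_{q}(a)}-1\big)$ and $v_{q}\big(\Phi_{dq^{k}}(a)\big)=1$ for $k\geq1$.

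For the first assertion I argue by contradiction: assume $f$ satisfies (\ref{divisorprop}) with $N=2$, is not a power of $x$, but every $d_{j}\geq2$. I aim to exhibit primes $p,q$ with $v_{q}(f(p))>v_{q}(f(p^{p}))$. A preliminary observation: if $p\nmid\operatorname{ord}_{q}(p)$ then $\operatorname{ord}_{q}(p^{p})=\operatorname{ord}_{q}(p)$ and, using $v_{q}(a^{t}-1)\leq v_{q}(a^{pt}-1)$, one gets $v_{q}(\Phi_{d_{j}}(p))\leq v_{q}(\Phi_{d_{j}}(p^{p}))$ termwise; hence a violation can only occur when $p\mid\operatorname{ord}_{q}(p)$, and then the contributing $d_{j}$ are divisible by $p$. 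So I take $d^{*}=\max_{j}d_{j}$, a prime $p\mid d^{*}$, and a primitive prime divisor $q$ of $p^{d^{*}}-1$; this exists by Zsygmondy's theorem, since the exceptional pairs $(a,n)$ cannot occur here ($n=d^{*}\geq2$, the case $n=2$ needs $a=2^{k}-1$ whereas $a=p=2$, and for $n=6$ one has the alternative choice $p=3$). Because $q\equiv1\pmod{d^{*}}$ we have $q>d^{*}\geq d_{i}$ for all $i$, so in $f(p)$ only $\Phi_{d^{*}}$ is divisible by $q$, and in $f(p^{p})$ (where $\operatorname{ord}_{q}(p^{p})=d^{*}/p$) only $\Phi_{d^{*}/p}$ can be. Thus (\ref{divisorprop}) forces $e(d^{*})\,v_{q}(p^{d^{*}}-1)\leq[\,d^{*}/p\in\{d_{i}\}\,]\,e(d^{*}/p)\,v_{q}(p^{d^{*}}-1)$, i.e.\ $d^{*}/p$ must itself be some $d_{i}$, with exponent at least $e(d^{*})$, and this for every prime $p\mid d^{*}$. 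The cleanest way to finish is to run the same comparison with $d^{*}$ replaced by the \emph{smallest} $d_{i}$: then $d^{*}/p$ is strictly smaller than every $d_{i}$ and cannot occur, so the forced inequality $e(d)\,v_{q}(\cdot)\leq 0$ is absurd, giving the contradiction (which is exactly the statement $\Phi_{1}\mid f$). The main obstacle, which I expect to be the real work, is that for a non-maximal $d$ the primitive prime divisor $q$ of $p^{d}-1$ is only guaranteed to satisfy $q\equiv1\pmod d$, hence $q>d$, but not $q>\max_{i}d_{i}$; overcoming this needs either a quantitative lower bound on the largest prime factor of $\Phi_{d}(p)$ (so that $q$ may be chosen coprime to $\operatorname{lcm}(d_{1},\dots,d_{m})$) or a careful accounting of multiplicities showing some admissible pair $(d,p)$ already suffices.

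For the second assertion, take $f(x)=x^{e_{0}}(x-1)^{e_{1}}\Phi_{p_{2}}(x)^{e_{2}}\cdots\Phi_{p_{m}}(x)^{e_{m}}$ with $p_{2},\dots,p_{m}$ distinct primes, and verify $v_{q}(f(p))\leq v_{q}(f(p^{p}))$ for all primes $p,q$. For $q=p$ the two sides are $e_{0}$ and $pe_{0}$, done. For $q\neq p$ set $e=\operatorname{ord}_{q}(p)$; since every $d_{j}$ is $1$ or a prime, only $e\in\{1\}\cup\{p_{2},\dots,p_{m}\}$ is relevant, and $\operatorname{ord}_{q}(p^{p})=e/\gcd(e,p)$. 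If $p\nmid e$ then $\operatorname{ord}_{q}(p^{p})=e$ and the comparison is termwise, as in the preliminary observation above. The one delicate case is $p\mid e$; as $e$ is $1$ or prime this forces $e=p$, hence $p=p_{j}$ for some $j$ and $\operatorname{ord}_{q}(p^{p})=1$. Now the $q$-power that $\Phi_{p_{j}}(p)^{e_{j}}$ puts into $f(p)$, namely $e_{j}\,v_{q}(p^{p}-1)$, must be absorbed by $f(p^{p})$, and it is absorbed by the factor $\Phi_{1}(p^{p})^{e_{1}}=(p^{p}-1)^{e_{1}}$ together with $v_{q}(p-1)\leq v_{q}(p^{p}-1)$ --- this is precisely where one uses that $\Phi_{1}$ is present (and, more sharply, with exponent at least each $e_{j}$). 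Collecting the cases yields (\ref{divisorprop}) with $N=2$.
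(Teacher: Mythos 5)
Your reduction via Theorem \ref{main1}, the preliminary observation that a failure of (\ref{divisorprop}) can only come from $p\mid\operatorname{ord}_q(p)$, and the base step at $d^{*}=\max_j d_j$ (including the switch to $p=3$ in the Zsigmondy-exceptional case $d^{*}=6$) are all sound. But the first assertion is not actually proved: the descent below the maximum is exactly the missing piece, and you flag it yourself. Your mechanism needs the auxiliary prime $q$ to exceed \emph{all} the $d_i$, so that only $\Phi_{d/p}$ can absorb it on the right-hand side; a primitive prime divisor of $p^{d}-1$ only gives $q>d$, which fails once $d$ is not maximal. The paper closes this gap by changing what $q$ has to beat: it suffices that $q$ exceed the largest prime factor $p_k$ of $M=\text{lcm}(d_1,\dots,d_m)$, not the $d_i$ themselves, because Lemma \ref{sufLemma} (2) shows $d_j/(d/p_i)$ is a power of $q$, and any nontrivial power would force $q\mid d_j$, impossible when $q>p_k$. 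To guarantee $q>p_k$ one evaluates at a prime $p_i\mid d$ dividing $M$, uses Lemma \ref{largePrimitive} (every primitive prime divisor of $\Phi_d(p_i)$ exceeds $d$), and keeps $p_k\mid d$ throughout the descent by always removing the \emph{least} prime factor of $d$; this is Lemma \ref{reductionLemma}. The leftover Zsigmondy case $(p_i,d)=(2,6)$, i.e. $6\in\mathcal{A}$ and $p_k=3$, is then handled separately via $\Phi_6(2)=3$, $3\mid\Phi_1(4)$ and Lemma \ref{sufLemma} (2), which places a power of $3$ in $\mathcal{A}$ and restarts the descent. So the missing idea is precisely this ``beat the prime factors of the $d_j$, not the $d_j$ themselves'' pivot; neither of your two suggested repairs (a lower bound on the largest prime factor of $\Phi_d(p)$, or multiplicity bookkeeping at a single pair $(d,p)$) is what is needed, and as written your finish at the smallest $d_i$ does not go through.

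Your verification of the second assertion is essentially the paper's: termwise use of Lemma \ref{sufLemma} (1) for $p\neq p_j$, and the identity $\Phi_{p_j}(p_j)\Phi_1(p_j)=p_j^{p_j}-1=\Phi_1(p_j^{p_j})$ at $p=p_j$. Your parenthetical caveat that $\Phi_1$ must appear with exponent at least each $e_j$ is a genuine point rather than a refinement: with the exponents left arbitrary as in (\ref{primeFac}) the claim fails, e.g. $f(x)=(x-1)(x+1)^2$ gives $f(2)=9\nmid 75=f(4)$, so the absorption argument (yours and the paper's) really does require $e_1\geq e_j$ (or exponents all equal to $1$). This observation, however, does not affect the first assertion, which is where the substance of the theorem lies and where your argument is incomplete.
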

\section{Preliminaries}
Along the way to deriving Theorem \ref{main1} we will make use of the following consequence of the Chebotarëv density theorem.
\begin{theorem}[\textbf{\cite{SL93}}] \label{specialChebotarev}
Let $f(x) \in \mathbb{Z}[x]$ be a monic polynomial having nonzero discriminant $\Delta(f(x)) \neq 0$. For a positive proportion of the primes, the reduction $f(x) \mod p \in \mathbb{F}_{p}[x]$ will split completely as the product of distinct linear factors.
\end{theorem}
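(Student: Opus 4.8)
The plan is to deduce this statement from the full Chebotarëv density theorem applied to the splitting field of $f$. Write $K=\mathrm{split}(f(x),\mathbb{Q})$ and $G=\mathrm{Gal}(K/\mathbb{Q})$. Since $\Delta(f(x))\neq 0$ the polynomial $f$ is separable, so $K/\mathbb{Q}$ is a finite Galois extension and $G$ acts faithfully on the finite set $R$ of roots of $f$ in $K$. The whole argument will hinge on translating "$f(x)\bmod p$ splits into distinct linear factors'' into a statement about a single Frobenius conjugacy class in $G$.

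First I would recall the classical Dedekind--Frobenius dictionary: for every prime $p$ with $p\nmid\Delta(f(x))$, the reduction $f(x)\bmod p$ is separable, the prime $p$ is unramified in $K$, and the multiset of degrees of the irreducible factors of $f(x)\bmod p$ over $\mathbb{F}_p$ equals the multiset of cycle lengths of $\mathrm{Frob}_p\in G$ acting on $R$. Only the finitely many primes dividing $\Delta(f(x))$ are excluded in this step. In particular, $f(x)\bmod p$ is a product of distinct linear factors if and only if $\mathrm{Frob}_p$ fixes every element of $R$, which, by faithfulness of the action of $G$ on $R$, happens if and only if $\mathrm{Frob}_p=\mathrm{id}$ in $G$.

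It then remains to count. By Chebotarëv's density theorem, the set of primes $p$ unramified in $K$ with $\mathrm{Frob}_p$ lying in the conjugacy class $\{\mathrm{id}\}$ has natural density $1/|G|$. Deleting the finitely many primes dividing $\Delta(f(x))$ changes nothing, so the set of primes $p$ with $p\nmid\Delta(f(x))$ and $\mathrm{Frob}_p=\mathrm{id}$ has density $1/|G|>0$, and by the previous paragraph every such prime has $f(x)\bmod p$ splitting as a product of distinct linear factors. This is the claimed positive proportion.

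Granting Chebotarëv, this is essentially immediate, so there is no genuine obstacle; the only points requiring a little care are (i) keeping "$p\mid\Delta(f(x))$'' and "$p$ ramifies in $K$'' apart — both exceptional sets are finite, which is all we need — and (ii) stating the factorization/cycle-type correspondence precisely. If one wished to avoid invoking the density theorem in its full strength, one could instead pick a primitive element $\theta$ for $K/\mathbb{Q}$ with minimal polynomial $h\in\mathbb{Z}[x]$, observe that for $p\nmid\Delta(h)$ the reduction $h(x)\bmod p$ either splits completely or has no root in $\mathbb{F}_p$ (because $K/\mathbb{Q}$ is Galois, so all primes above $p$ share one residue degree), and then show that a positive proportion of primes divide some value $h(m)$; but that last step is of comparable depth to what it replaces, so appealing to Chebotarëv directly is the natural route, exactly as in \cite{SL93}.
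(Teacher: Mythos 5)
Your argument is correct: for $p\nmid\Delta(f)$ the factorization type of $f\bmod p$ matches the cycle type of $\mathrm{Frob}_p$ on the roots, so complete splitting corresponds to $\mathrm{Frob}_p=\mathrm{id}$, which by Chebotar\"ev has density $1/|G|>0$. The paper itself offers no proof but simply quotes this statement from \cite{SL93}, and your deduction is exactly the standard argument given there, so there is nothing to add.
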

Theorem \ref{specialChebotarev} will be crucial because it will allow us to apply the following trick that later will show that any solution $f(x)$ must, essentially, be a product of cyclotomic polynomomials.
\begin{lemma} \label{rootimplication}
Let $g(x)$ and $h(x)$ be monic integer polynomials with $h(x)$ having discriminant $\Delta(h(x)) \neq 0$. Assume that for infinitely many primes $p$, for which $h(x) \mod p$ splits completely, $g(x)$ has the following property:
\begin{equation}
    k \in \mathbb{Z}~\text{and } h(k) \equiv 0 \pmod{p} \Longrightarrow g(k) \equiv 0 \pmod{p}. \label{rootImplication}
\end{equation}
Then $h(x)$ divides $g(x)$ in $\mathbb{Z}[x]$. 
\end{lemma}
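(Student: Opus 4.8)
The plan is to reduce the claim to the vanishing of a single remainder polynomial. Since $h(x)$ is monic, division with remainder is available over $\mathbb{Z}[x]$, so I would first write $g(x) = q(x)h(x) + r(x)$ with $q(x), r(x) \in \mathbb{Z}[x]$ and $\deg r < n$, where $n = \deg h$. If $n = 0$ then $h(x) = 1$ and the statement is trivial, so assume $n \ge 1$. It then suffices to prove $r(x) = 0$, which is exactly the assertion $h(x) \mid g(x)$ in $\mathbb{Z}[x]$.

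The discriminant hypothesis enters only to secure enough \emph{distinct} roots modulo $p$. Because $\Delta(h(x)) \neq 0$ is a fixed nonzero integer, it has finitely many prime divisors; removing these from the infinite set of primes provided by the hypothesis still leaves infinitely many primes $p$ for which simultaneously the implication (\ref{rootImplication}) holds and $h(x) \bmod p = \prod_{i=1}^{n}(x - \alpha_i)$ with $\alpha_1, \dots, \alpha_n \in \mathbb{F}_p$ pairwise distinct. For one such $p$, choose for each $i$ a lift $k_i \in \mathbb{Z}$ with $k_i \equiv \alpha_i \pmod p$. Then $h(k_i) \equiv 0 \pmod p$, so (\ref{rootImplication}) gives $g(k_i) \equiv 0 \pmod p$, and since $g(k_i) = q(k_i)h(k_i) + r(k_i) \equiv r(k_i) \pmod p$, we obtain $r(k_i) \equiv 0 \pmod p$. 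Hence the reduction $\bar r(x) := r(x) \bmod p$ vanishes at the $n$ distinct points $\alpha_1, \dots, \alpha_n$ in $\mathbb{F}_p$, while $\deg \bar r \le \deg r < n$. As a nonzero polynomial over the field $\mathbb{F}_p$ has at most as many roots as its degree, $\bar r(x)$ must be the zero polynomial, i.e. $p$ divides every coefficient of $r(x)$.

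To finish, I would let $p$ vary: the statement ``$p$ divides every coefficient of $r(x)$'' then holds for infinitely many primes $p$, and since a fixed nonzero integer has only finitely many prime divisors, every coefficient of $r(x)$ must be zero. Therefore $r(x) = 0$ and $g(x) = q(x)h(x)$, which proves the lemma.

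I do not expect a real obstacle here: the proof is a clean combination of division with remainder in $\mathbb{Z}[x]$ (legitimate precisely because $h(x)$ is monic), the root bound for polynomials over a field, and the finiteness of the set of prime divisors of a nonzero integer. The only point that requires a little care is upgrading ``splits completely'' to ``splits into $n$ \emph{distinct} linear factors modulo $p$'' — and this is exactly the role of the hypothesis $\Delta(h(x)) \neq 0$, which lets one discard the finitely many primes dividing the discriminant while still keeping infinitely many usable primes.
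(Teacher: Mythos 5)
Your proof is correct and follows essentially the same route as the paper: division with remainder by the monic $h(x)$, evaluation at the $\deg h$ distinct roots of $h \bmod p$ to force the reduction of $r(x)$ to vanish, and then killing $r(x)$ itself (the paper does this with one prime larger than all coefficients of $r$, you do it by letting infinitely many primes divide each coefficient — an immaterial difference). Your explicit remark that one must discard the finitely many primes dividing $\Delta(h)$ to guarantee distinct roots is a careful touch the paper leaves implicit, but the argument is the same.
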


\begin{remark}
In relation to Lemma \ref{rootimplication} we note that for monic integer polynomials $g(x)$ and $h(x)$ satisfying the conditions in Lemma \ref{rootimplication} it holds that for infinitely many primes $p$, $(h(x) \mod p) \mid (g(x) \mod p)$. As such, Lemma \ref{rootimplication} can be generalized since the existence of infinitely many primes $p$ such that $(h(x) \mod p) \mid (g(x) \mod p)$, in fact, implies that $h(x)$ divides $g(x)$ in $\mathbb{Z}[x]$ -- regardless of $h(x)$ having nonzero discriminant. However, for later convienience we prefer to use the formulation of Lemma \ref{rootimplication} as we believe it will make the proof of Theorem \ref{main1} more clear.
\end{remark}

We give the proof of Lemma \ref{rootimplication} in the next section.

In order to show that all polynomials of the form (\ref{primeFac}) satisfy (\ref{divisorprop}) for all large primes we will use the following two elementary properties of cyclotomic polynomials.
\begin{lemma} \label{sufLemma}
For cyclotomic polynomials the following holds:
\begin{enumerate}
\item[(1)] For a prime $p$ and a positive integer $d$, we have
\[
\Phi_{d}\left( x^{p} \right) =
\begin{cases}
\Phi_{pd}(x) &\text{if } p \mid d \\ \Phi_{pd}(x) \Phi_{d}(x) &\text{if } p \nmid d. \end{cases}
\]
\item[(2)] If $p$ is a prime and $n$ and $m$ are positive integers such that
\[
    p \mid (\Phi_{n}(b),\Phi_{m}(b))
\]
for some integer $b \geq 2$, then $m/n = p^{a}$ for some integer $a$.
\end{enumerate}
\end{lemma}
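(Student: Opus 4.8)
The plan is to deduce both parts from the fundamental identity $\prod_{e\mid n}\Phi_e(x) = x^n-1$, treating them in turn.

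For part (1) I would compare roots over $\mathbb{C}$. The roots of $\Phi_d(x^p)$ are exactly the $\eta\in\mathbb{C}^\times$ for which $\eta^p$ is a primitive $d$-th root of unity; writing $e$ for the order of $\eta$, the order of $\eta^p$ is $e/\gcd(e,p)$, so this condition forces either $e=d$ (which is possible only when $p\nmid d$, in which case the $p$-th power map is an automorphism of the group of $d$-th roots of unity and yields exactly one such $\eta$ per primitive $d$-th root of unity) or $e=pd$. Moreover $\Phi_d(x^p)$ divides $x^{pd}-1$ and is therefore squarefree. When $p\mid d$, only $e=pd$ can occur, every primitive $pd$-th root of unity $\eta$ does satisfy that $\eta^p$ has order $pd/p=d$, and $\deg\Phi_d(x^p)=p\varphi(d)=\varphi(pd)$; comparing monic squarefree polynomials with the same roots gives $\Phi_d(x^p)=\Phi_{pd}(x)$. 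When $p\nmid d$, each primitive $d$-th root of unity $\zeta$ has exactly one $p$-th root of order $d$ and $p-1$ of order $pd$, so the roots of $\Phi_d(x^p)$ are precisely all primitive $d$-th and all primitive $pd$-th roots of unity, each occurring once; since $p\varphi(d)=\varphi(d)+(p-1)\varphi(d)$ the degrees match and we conclude $\Phi_d(x^p)=\Phi_d(x)\Phi_{pd}(x)$. (One may also argue by induction on $d$, applying $\prod_{e\mid n}\Phi_e(x)=x^n-1$ to $x^p$.)

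For part (2), observe first that $p\mid\Phi_n(b)$ together with $\Phi_n(b)\mid b^n-1$ forces $p\nmid b$, so $b$ is a unit modulo $p$; let $t$ denote the multiplicative order of $b$ modulo $p$. I claim that $p\mid\Phi_n(b)$ implies $n=tp^i$ for some integer $i\ge 0$. Indeed, $p\mid b^n-1$ gives $t\mid n$; and if some prime $q\ne p$ divided $n/t$, then $t\mid n/q$, hence $p\mid b^{n/q}-1$, while $\prod_{e\mid n}\Phi_e(x)=x^n-1$ yields $\Phi_n(x)\mid\dfrac{x^n-1}{x^{n/q}-1}=\sum_{j=0}^{q-1}x^{jn/q}$ in $\mathbb{Z}[x]$, so that $\Phi_n(b)$ divides $\sum_{j=0}^{q-1}b^{jn/q}\equiv q\pmod p$; then $p\mid\Phi_n(b)$ would force $p\mid q$, contradicting $q\ne p$. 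So $n/t$ has no prime factor other than $p$, proving the claim. Applying it to both $n$ and $m$ (with the same $t$, since $t$ depends only on $b$ and $p$) gives $n=tp^i$ and $m=tp^j$, whence $m/n=p^{j-i}$ is an integer power of $p$.

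The step I expect to be the main obstacle is the one in part (2) that eliminates the prime factors $q\ne p$ of $n/t$: it relies on the exact divisibility $\Phi_n(x)\mid(x^n-1)/(x^{n/q}-1)$ in $\mathbb{Z}[x]$ combined with the congruence $(b^n-1)/(b^{n/q}-1)\equiv q\pmod p$ valid whenever $p\mid b^{n/q}-1$. The rest is degree counting and routine manipulation of roots of unity.
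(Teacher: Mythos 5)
Your proof is correct, and it is worth noting that the paper itself does not prove this lemma at all: part (1) is quoted from Dresden \cite{D12} (Lemma 3 there) and part (2) is stated to follow easily from Lemma 2 of Filaseta \cite{F02}. So your argument is a genuinely self-contained alternative. For part (1) your roots-of-unity bookkeeping is sound: $\Phi_d(x^p)$ divides $x^{pd}-1$, hence is squarefree, its roots are exactly the $\eta$ of order $d$ (only possible when $p\nmid d$) or $pd$, and the degree counts $p\varphi(d)=\varphi(pd)$ (for $p\mid d$) and $p\varphi(d)=\varphi(d)+\varphi(pd)$ (for $p\nmid d$) close the argument. For part (2) you are in effect reproving the classical fact behind Filaseta's lemma, namely that $p\mid\Phi_n(b)$ forces $n=tp^i$ where $t$ is the order of $b$ modulo $p$; the one step that needed care, the exact divisibility $\Phi_n(x)\mid(x^n-1)/(x^{n/q}-1)$ in $\mathbb{Z}[x]$, is justified because the right-hand side equals $\prod_{e\mid n,\ e\nmid n/q}\Phi_e(x)$ and $n$ occurs among these $e$, and then the congruence $\sum_{j=0}^{q-1}b^{jn/q}\equiv q\pmod p$ rules out prime divisors $q\ne p$ of $n/t$. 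Applying this to both $n$ and $m$ with the same $t$ and taking the quotient gives $m/n=p^{j-i}$, matching the statement (where the exponent $a$ may be negative). What the paper's citation route buys is brevity; what your route buys is a complete elementary verification from the single identity $x^n-1=\prod_{e\mid n}\Phi_e(x)$, which a reader can check without consulting either reference.
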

The first property is Lemma 3 in \cite{D12} and the second property easily follows from Lemma 2 in \cite{F02}.
\begin{definition}
A primitive prime divisor $p$ for a term $a_{n}$ in an integer sequence $(a_{i})_{i=1}^{\infty}$ is a prime dividing $a_{n}$ but not dividing $a_{i}$ for all positive integers $i < n$.
\end{definition}
\begin{lemma}[\textbf{Zsigmondy's theorem, \cite{EW05}}]
\label{Zsigemondy}
Let $b \geq 2$ be an integer. Any term in the sequence $(\Phi_{d}(b))_{d=1}^{\infty}$ has a primitive prime divisor, except for the cases
\begin{itemize}
    \item $b = 2$ and $d = 1$,
    \item $b=2^{a}-1$ and $d=2$,
    \item $b = 2$ and $d = 6$.
\end{itemize}
\end{lemma}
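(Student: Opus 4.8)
The statement is the cyclotomic incarnation of Zsigmondy's theorem, so the plan is to reconstruct its classical proof, built around the two facts already available: the factorisation $b^{n}-1=\prod_{e\mid n}\Phi_{e}(b)$ and part (2) of Lemma~\ref{sufLemma}. The first step is to rephrase ``$\Phi_{d}(b)$ has a primitive prime divisor'' in terms of $\Phi_{d}(b)$ and $d$ alone. If a prime $p$ divides $\Phi_{d}(b)$ and also $\Phi_{i}(b)$ for some $i<d$, then Lemma~\ref{sufLemma}(2), applied with the smaller index $i$ and the larger index $d$, gives $d=ip^{a}$ with $a\geq 1$, so $p\mid d$. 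Conversely, if $p\mid\Phi_{d}(b)$ and $p\mid d$, let $m$ be the multiplicative order of $b$ modulo $p$; from $p\mid b^{d}-1$ one gets $m\mid d$ and $p\mid\Phi_{e}(b)$ for some divisor $e\mid m$, while $m=d$ is impossible because it would give $p\mid d=m\mid p-1$; hence $p$ already divides an earlier term of the sequence. Thus $\Phi_{d}(b)$ has a primitive prime divisor precisely when it has a prime factor not dividing $d$.

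For the second step I would pin down which primes dividing $d$ can occur in $\Phi_{d}(b)$, and with what multiplicity. If $p\mid d$ and $p\mid\Phi_{d}(b)$, write $d=p^{a}m$ with $p\nmid m$; then $m$ is the order of $b$ modulo $p$, so $m\mid p-1$ and every prime factor of $m$ is $<p$, whence $p$ is the largest prime factor $P(d)$ of $d$. So at most one prime of $d$ divides $\Phi_{d}(b)$, and combined with the first step, $\Phi_{d}(b)$ fails to have a primitive prime divisor exactly when $\Phi_{d}(b)\in\{1,P(d),P(d)^{2},\dots\}$. A lifting-the-exponent computation shows $v_{P(d)}(\Phi_{d}(b))\leq 1$ for $d\geq 3$, so for those $d$ failure means $\Phi_{d}(b)=P(d)$ (one checks $\Phi_{d}(b)>1$ there anyway), whereas for $d=1,2$ the tiny cases are resolved directly: $\Phi_{1}(b)=b-1=1$ forces $b=2$, and $\Phi_{2}(b)=b+1$ a power of $2$ forces $b=2^{a}-1$; in the same spirit $\Phi_{3}(b)=b^{2}+b+1=3$, $\Phi_{4}(b)=b^{2}+1$ a power of $2$, and $\Phi_{6}(b)=b^{2}-b+1=3$ are examined, the last of these giving $b=2$. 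These are exactly the recorded exceptions.

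The remaining work --- and the genuine obstacle --- is the size estimate: for every pair $(d,b)$ outside that list one must show $\Phi_{d}(b)>P(d)$, which for $d\geq 3$ rules out $\Phi_{d}(b)=P(d)$. From $\Phi_{d}(b)=\prod_{\gcd(k,d)=1}(b-\zeta_{d}^{k})$ one has the crude bound $\Phi_{d}(b)\geq (b-1)^{\phi(d)}$, and since $P(d)\leq d$ this settles every $b\geq 3$ once $d$ exceeds a small explicit threshold, with the finitely many remaining $d$ checked by hand. The delicate case is $b=2$, where $(b-1)^{\phi(d)}=1$ is useless: here one instead groups the factors $|2-\zeta_{d}^{k}|$ into conjugate pairs, each contributing $5-4\cos\theta>1$ with a quantitative gain depending on how far $\theta$ is from $0$, to conclude $\Phi_{d}(2)>P(d)$ for all $d\notin\{1,6\}$ --- or one simply invokes a standard sharp lower bound for $\Phi_{n}(2)$. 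This estimate for $b=2$, together with the bookkeeping of where exactly the crude bound first suffices, is the main difficulty; everything else is formal manipulation of the two quoted properties of cyclotomic polynomials.
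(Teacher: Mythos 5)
The paper never proves this lemma --- it is Zsigmondy's theorem, quoted directly from \cite{EW05} --- so the only question is whether your reconstruction stands on its own. Its reduction steps do: the equivalence ``$\Phi_{d}(b)$ has a primitive prime divisor if and only if it has a prime factor not dividing $d$'' is correctly derived from Lemma \ref{sufLemma}(2) together with the order argument, and the refinement that the only prime of $d$ that can divide $\Phi_{d}(b)$ is its largest prime factor $P(d)$, to the first power once $d\geq 3$, is the standard and correct next move; the small cases $d=1,2,3,4,6$ are also handled correctly and do produce exactly the listed exceptions.

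The genuine gap sits exactly where you locate ``the main difficulty'': the inequality $\Phi_{d}(2)>P(d)$ for all $d\geq 3$, $d\neq 6$, is asserted but never proved. Observing that conjugate pairs of factors contribute $5-4\cos\theta>1$ ``with a quantitative gain'' is not an argument --- a product of factors each barely exceeding $1$ need not exceed $P(d)$, and making that gain precise is the analytic core of Zsigmondy's theorem, e.g.\ via $\Phi_{d}(2)=\prod_{e\mid d}(2^{e}-1)^{\mu(d/e)}\geq 2^{\phi(d)-2^{\omega(d)-1}}$ followed by a case analysis in which the exceptional case $d=6$ must surface. Falling back on ``a standard sharp lower bound for $\Phi_{n}(2)$'' amounts to citing the literature for the one step that carries the content, which is what the paper already does, more transparently, by quoting the theorem from \cite{EW05}; as a self-contained proof your write-up is therefore incomplete at precisely its decisive point. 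A minor simplification elsewhere: for $b\geq 3$ no threshold-and-hand-check is needed, since $\phi(d)\geq P(d)-1$ gives $(b-1)^{\phi(d)}\geq 2^{P(d)-1}\geq P(d)$, and the bound $\Phi_{d}(b)>(b-1)^{\phi(d)}$ is strict for $d\geq 2$, so the failure $\Phi_{d}(b)=P(d)$ is ruled out for all $d\geq 3$ at once.
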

We will show that $\Phi_{d}(b)$ typically has a prime divisor $>d$.
\begin{lemma} \label{largePrimitive}
Let $b \geq 2$ and $d$ be positive integers such that $(b,d) \neq (2,1)$, $(b,d) \neq (2,6)$ and $(b,d) \neq (2^{a}-1,2)$ for all positive integers $a$. Then there is a prime, $p >d$, satisfying $p \mid \Phi_{d}(b)$.
\end{lemma}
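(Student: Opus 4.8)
The plan is to extract the desired prime from Zsigmondy's theorem (Lemma \ref{Zsigemondy}) and then show that a primitive prime divisor of $\Phi_{d}(b)$ is necessarily congruent to $1$ modulo $d$. First I would dispose of the case $d=1$ by hand: the pair $(b,1)$ is excluded only when $b=2$, so here $b\geq 3$, whence $\Phi_{1}(b)=b-1\geq 2$ has a prime divisor $p$, and trivially $p\geq 2>1=d$.

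Now suppose $d\geq 2$. The pairs $(b,d)$ excluded in the statement of Lemma \ref{largePrimitive} are exactly the ones for which Lemma \ref{Zsigemondy} fails, so Zsigmondy's theorem supplies a primitive prime divisor $p$ of $\Phi_{d}(b)$: we have $p\mid\Phi_{d}(b)$ but $p\nmid\Phi_{i}(b)$ for every positive integer $i<d$. The heart of the argument is to show that the multiplicative order $f:=\mathrm{ord}_{p}(b)$ equals $d$. Since $\Phi_{d}(b)\mid b^{d}-1$ we get $\gcd(b,p)=1$, so $f$ is well defined and $f\mid d$. From $p\mid b^{f}-1=\prod_{e\mid f}\Phi_{e}(b)$ there is some $e\mid f$ with $p\mid\Phi_{e}(b)$; primitivity forces $e\geq d$, while $e\mid f\mid d$ gives $e\leq d$, so $e=d$, and then $d\mid f\mid d$ yields $f=d$. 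Finally, by Fermat's little theorem $d=f=\mathrm{ord}_{p}(b)\mid p-1$, hence $p\equiv 1\pmod{d}$; as $p$ is prime and $d\geq 2$ this gives $p\geq d+1>d$, as desired.

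The argument is short and, beyond Zsigmondy's theorem, essentially self-contained, so I do not anticipate a genuine obstacle. The only points needing a little care are the bookkeeping that the exceptional pairs appearing in Lemma \ref{largePrimitive} and in Lemma \ref{Zsigemondy} coincide, and the observation that in the degenerate case $d=1$ the congruence $p\equiv 1\pmod{d}$ conveys no information, so one must argue directly from $b\geq 3$.
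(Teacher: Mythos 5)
Your proof is correct and follows essentially the same route as the paper: invoke Zsigmondy to get a primitive prime divisor $p$ of $\Phi_{d}(b)$ and then use Fermat's little theorem together with the factorization $b^{n}-1=\prod_{e\mid n}\Phi_{e}(b)$ to force $p>d$. The only difference is cosmetic: the paper argues by contradiction from $p\leq d$, while you compute $\mathrm{ord}_{p}(b)=d$ directly and obtain the slightly stronger conclusion $p\equiv 1\pmod{d}$, which is exactly the refinement the paper notes in the remark following the lemma.
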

We give the proof of Lemma \ref{largePrimitive} in the next section. The proof in fact implies more, namely that, for $(b, d)$ as in Lemma \ref{largePrimitive}, we have that every primitive prime divisor of $\Phi_{d}(b)$ for the sequence $(\Phi_{d}(b))_{d=1}^{\infty}$ is greater than $d$.

\section{Proofs of Lemmas 3.2 and 3.7}
Before we prove Theorem \ref{main1} we will settle Lemma \ref{rootimplication} and Lemma \ref{largePrimitive}.
\begin{proof}[Proof of Lemma \ref{rootimplication}]
Since the polynomials in question are monic, we can do division in $\mathbb{Z}[x]$ and obtain
\begin{equation}
 g(x) = q(x)h(x)+r(x) \label{division}
\end{equation}
with $q(x),r(x) \in \mathbb{Z}[x]$ and $\text{deg}(r(x))<\text{deg}(h(x))$. We can pick a prime $p$ greater than all coefficients in $r(x)$ such that $h(x) \mod p$ splits completely in $\mathbb{F}_{p}[x]$ and such that (\ref{rootImplication}) is satisfied. Evaluating (\ref{division}) in the $\text{deg}(h(x))$ many distinct roots of $h(x) \mod p$ implies that $r(x) \mod p$ must be the zero polynomial. But as $p$ was chosen large this implies that $r(x) = 0$ holds in $\mathbb{Z}[x]$. Thus, $h(x)$ divides $g(x)$ in $\mathbb{Z}[x]$ as claimed.
\end{proof}
\begin{proof}[Proof of Lemma \ref{largePrimitive}] For $d = 1$ the claim is obviously fulfilled. Assume therefore that $d>1$.
By Lemma \ref{Zsigemondy} we may pick a prime $p$ such that $p \mid \Phi_{d}(b)$ and $p \nmid \Phi_{i}(b)$ for $1 \leq i \leq d-1$. We claim that $p > d$. Assume for the sake of a contradiction that $p \leq d$.

As $\Phi_{d}(0) = 1$ we must have $(p,b) = 1$, and therefore by Fermat's Little Theorem
\[
p \mid (b^{p-1}-1) = \prod_{k \mid (p-1)}\Phi_{k}(b),
\]
implying that $p \mid \Phi_{k}(b)$ for some $k \leq p-1<d$, contradicting the fact that $p$ is a primitive prime divisor.
\end{proof}

\section{Proof of the main results}

\begin{proof}[Proof of Theorem \ref{main1}]
Let first $f(x) \in \mathbb{Z}[x]$ be such that there is a constant $N=N(f)$ for which (\ref{divisorprop}) is satisfied.

As $f(x)$ is monic we can factorize it as, say,
\[f(x)=x^{a_{0}}p_{1}(x)^{a_{1}}p_{2}(x)^{a_{2}}\cdots p_{m}(x)^{a_{m}} \]
where the $p_{i}(x)$ are distinct irreducible monic polynomials in $\mathbb{Z}[x]$ different from $x$. Let
\[g(x): = p_{1}(x)p_{2}(x)\cdots p_{m}(x):=x^{n}+a_{n-1}x^{n-1}+\cdots +a_{1}x+a_{0}.\]
It is easy to see that $g(x)$ must then also be monic, have a nonzero discriminant and satisfy the weaker condition
\begin{equation}
	\text{rad}(g(p)) \mid g(p^{p}) ~\text{for all~}p \geq \max\{ N, |a_{0}|\}. \label{weakdivisorprop}
\end{equation}
If $g(x) = 1$ we are done, so assume that $\deg(g(x)) \geq 1$.
\begin{lemma} \label{boundedOrderLemma}
There is a constant $C:=C(g(x))$ only dependent on $g(x)$ such that there are infinitely many primes $q$ for which $g(x) \mod q$ splits completely and such that any root of $g(x) \mod q$ has order at most $C$ modulo $q$.
\end{lemma}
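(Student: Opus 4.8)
The plan is to manufacture the primes $q$ with Theorem~\ref{specialChebotarev} and then to squeeze the uniform bound $C$ out of a pigeonhole argument against $n := \deg(g(x))$. Since $g(x)$ is monic with nonzero discriminant, Theorem~\ref{specialChebotarev} supplies a positive proportion of primes $q$ for which $g(x) \bmod q$ is a product of $n$ distinct linear factors; after discarding the finitely many $q \le |a_0|$ we may also assume $q \nmid a_0 = g(0)$, so that $0$ is not a root of $g(x) \bmod q$ and all $n$ of its roots lie in $\mathbb{F}_q^{\times}$. I will show that $C := \max\{\, t \ge 1 : \varphi(t) \le n \,\}$ — a finite number, since $\varphi(t) \to \infty$ — works for all of these $q$.

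Fix one such prime $q$ and a root $k$ of $g(x) \bmod q$, and let $t = \mathrm{ord}_q(k)$ be its multiplicative order, so $t \mid q-1$ and hence $\gcd(q,t) = 1$. The key point is that Dirichlet's theorem lets us feed primes $p$ into the hypothesis (\ref{weakdivisorprop}) while prescribing $p$ both modulo $q$ and modulo $t$. Indeed, for any $c$ with $\gcd(c,t) = 1$ the congruences $p \equiv k \pmod q$ and $p \equiv c \pmod t$ are simultaneously solvable (as $\gcd(q,t)=1$) and determine a residue class modulo $qt$ that is coprime to $qt$; hence it contains a prime $p$ as large as we wish, in particular with $p \ge \max\{N,|a_0|\}$ and $g(p) > 1$. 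For such $p$ we get $g(p) \equiv g(k) \equiv 0 \pmod q$, so $q \mid \text{rad}(g(p))$ and therefore $q \mid g(p^p)$ by (\ref{weakdivisorprop}). On the other hand $p \equiv k \pmod q$ and $p \equiv c \pmod{t}$ give $p^p \equiv k^p \equiv k^c \pmod q$, whence $g(k^c) \equiv g(p^p) \equiv 0 \pmod q$: the element $k^c \in \mathbb{F}_q^{\times}$ is again a root of $g(x) \bmod q$.

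Now let $c$ range over a reduced residue system modulo $t$. The elements $k^c$ are then pairwise distinct — they are precisely the generators of the cyclic group $\langle k \rangle$ — and by the previous step each of them is a root of $g(x) \bmod q$. Since $g(x) \bmod q$ has only $n$ roots, this forces $\varphi(t) \le n$, i.e. $t \le C$. As $k$ was an arbitrary root of $g(x) \bmod q$, every root of $g(x) \bmod q$ has order at most $C$ modulo $q$; and there are infinitely many such $q$. This is exactly the statement of the lemma.

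The routine parts are the bookkeeping: checking that the residue class modulo $qt$ into which $p$ is placed is coprime to $qt$ so that Dirichlet applies, and that $q \nmid a_0$ together with $p$ large ensures $k \ne 0$ in $\mathbb{F}_q$ and that $\text{rad}(g(p))$ is the honest radical of a value $>1$. The one genuinely load-bearing idea — and the step I would expect to be the crux when writing out the details — is prescribing $p$ modulo the order $t = \mathrm{ord}_q(k)$, so that $p^p \equiv k^c \pmod q$ and all $\varphi(t)$ generators of $\langle k\rangle$ are dragged into the root set of $g(x)\bmod q$, turning $\deg(g(x))$ into an absolute ceiling on these orders.
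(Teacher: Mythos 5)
Your proof is correct, and in the decisive counting step it takes a genuinely different (and cleaner) route than the paper. Both arguments share the same engine: given a prime $q$ from Theorem~\ref{specialChebotarev} and an integer root $k$ of $g(x) \bmod q$, use the Chinese Remainder Theorem plus Dirichlet to produce large primes $p \equiv k \pmod{q}$ with a prescribed residue in an auxiliary modulus, feed them into (\ref{weakdivisorprop}) to conclude $q \mid g(p^{p})$, and thereby force many powers of $k$ to be roots of $g(x) \bmod q$, which the degree bound then caps. The difference is the auxiliary modulus. The paper prescribes $p$ modulo $q-1$, so it only learns that $q \mid g(k^{l})$ for $l$ coprime to $q-1$; to turn that into an order bound it must arrange that some small prime $p_{i}$ does not divide $q-1$ (hence the detour through the first $T$ primes, the pigeonhole selecting $p_{i}$, the restriction to primes $q \not\equiv 1 \pmod{p_{i}}$, and the constant $C = p_{T}^{\deg(g)+2}$). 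You instead prescribe $p$ modulo $t = \mathrm{ord}_{q}(k)$ itself, which is legitimate since $t \mid q-1$ gives $\gcd(q,t)=1$ and the class you build modulo $qt$ is reduced; this drags every one of the $\varphi(t)$ generators of $\langle k \rangle$ into the root set of $g(x) \bmod q$ and yields $\varphi(t) \le \deg(g)$ directly. What your version buys: the explicit and sharper constant $C = \max\{t : \varphi(t) \le \deg(g)\}$, no need for the Prime Number Theorem in arithmetic progressions or the pigeonhole over $p_{1},\dots,p_{T}$, and the conclusion holds for all but finitely many of the Chebotarëv primes rather than only for a specially selected infinite subfamily. Your bookkeeping (discarding $q \le |a_{0}|$ so that $0$ is not a root, taking $p$ large enough that $p \ge \max\{N,|a_{0}|\}$ and $g(p) > 1$) is exactly what is needed, and the distinctness of the $k^{c}$ for $c$ in a reduced residue system modulo $t$ is immediate from $t$ being the order of $k$, so there is no gap.
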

\begin{proof}
Consider primes $q$ satisfying $q > \max\{N,|a_{0}|\}$ and for which there is an integer $r \in \mathbb{Z}$ such that $q \mid g(r)$. Note that as $a_{0} \neq 0$ we must have $(r,q) =1 $, and thus by Dirichlet's Theorem for Primes in Arithmetic Progressions there are infinitely many primes $p>r$ such that $p \equiv r \pmod{q}$. Since $p>q>N$, we obtain from (\ref{weakdivisorprop}) that
\[q \mid \text{rad}(g(p)) \mid g(p^{p}).\]
Thus, $q \mid g(r^{p})$ for any prime $p>r$ such that $p \equiv r \pmod{q}$. However, combining the Chinese Remainder Theorem and Dirichlet's Theorem for Primes in Arithmetic Progressions we are free to choose any reduced residue for $p \pmod{q-1}$. Thus, for any positive integer $l$ coprime to $q-1$ we must have $q \mid g(r^{l}) $. But as the reduction of $g(x)$ modulo $q$ is nonzero there can be at most $\text{deg}(g(x))$ distinct values in the sequence $(r^{l} \mod q)_{(l,q-1)=1}$. 

By Theorem \ref{specialChebotarev}, there is a density $\delta \in (0,1]$ of primes $q$ for which $g(x)$ splits completely modulo $q$. Choose $T$ so large that the $T$ first primes $p_{1},p_{2},\dots ,p_{T}$ satisfy
\[
(p_{1}-1)(p_{2}-1)\cdots (p_{T}-1) \geq 2\delta^{-1},
\]
and put $C=p_{T}^{\text{deg}(g(x))+2}$. By the Prime Number Theorem for Primes in Arithmetic Progressions there are infinitely many primes $q$ such that $g(x)$ splits completely modulo $q$ and such that $q \not \equiv 1 \pmod{ p_{1}p_{2}\cdots p_{T}}$. By the pigeonhole principle there must be an $1 \leq i \leq T$ such that there are infinitely many such primes $q \not \equiv 1 \pmod{p_{i}}$. Let
\[
Q_{i}: = \{ q~\text{prime}: q \not\equiv 1 \pmod {p_{i}},~\text{and}~g(x)~\text{splits completely modulo}~q\}
\]
which then is an infinite set.
We claim that with our choice of $C$ the elements of $Q_{i}$ satisfy that the order of any root of $g(x)$ modulo $q$ is at most $C$.

To check this, let $q \in Q_{i}$ and let $r$ be a root of $g(x)$ modulo $q$. If the order of $r$ modulo $q$ were greater than $C$ we would have
\[
\{ r^{p_{i}} \mod q,r^{p_{i}^{2}} \mod q,\dots, r^{p_{i}^{\text{deg}(g(x))+1}} \mod q \} \subseteq (r^{l} \mod q )_{(l,q-1)=1}
\]  
which would be a contradiction as the elements on the left-hand side would be pairwise distinct.
\end{proof}
\begin{lemma}
There is an integer $M$ such that $g(x) \mid (x^{M}-1)$.
\end{lemma}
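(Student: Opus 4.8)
The plan is to repackage Lemma \ref{boundedOrderLemma} as a divisibility statement and then feed it into Lemma \ref{rootimplication}. Let $C := C(g(x))$ be the constant supplied by Lemma \ref{boundedOrderLemma}, and set $M := \operatorname{lcm}(1,2,\dots ,C)$. By Lemma \ref{boundedOrderLemma} there are infinitely many primes $q$ for which $g(x) \bmod q$ splits completely and for which every root of $g(x) \bmod q$ has order at most $C$ modulo $q$; since this list is infinite, I may in addition restrict to those $q$ exceeding $|a_{0}| = |g(0)|$, so that $q \nmid g(0)$ and hence $0$ is never a root of $g(x) \bmod q$ — this makes "order modulo $q$" meaningful for every root.

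First I would check that for each such prime $q$ the polynomial $g(x)$ satisfies the root implication \eqref{rootImplication} relative to $x^{M}-1$: if $k \in \mathbb{Z}$ and $g(k) \equiv 0 \pmod q$, then the residue class of $k$ modulo $q$ is a root of $g(x) \bmod q$, so its multiplicative order modulo $q$ is a positive integer at most $C$, hence divides $M$; therefore $k^{M} \equiv 1 \pmod q$, i.e. $k^{M}-1 \equiv 0 \pmod q$.

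Then I would apply Lemma \ref{rootimplication} with the roles $h(x) := g(x)$ — which is monic and has nonzero discriminant, by the reduction carried out at the start of the proof of Theorem \ref{main1} — and with the polynomial denoted $g$ in the statement of that lemma taken to be the monic polynomial $x^{M}-1$. The previous paragraph shows exactly that \eqref{rootImplication} holds for the infinitely many primes $q$ above, each of which makes $g(x) \bmod q$ split completely. Lemma \ref{rootimplication} then gives that $g(x)$ divides $x^{M}-1$ in $\mathbb{Z}[x]$, which is the assertion.

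I do not expect a real obstacle here, since all the work is already contained in Lemma \ref{boundedOrderLemma}. The only two points needing a moment's care are: (i) choosing $M$ to be divisible by every positive integer up to $C$ (e.g. $\operatorname{lcm}(1,\dots ,C)$ or $C!$), so that the bound "order $\le C$" upgrades to "order divides $M$"; and (ii) keeping the primes $q$ large enough that $0$ is not a root of $g(x)\bmod q$, so that the orders invoked are genuinely defined.
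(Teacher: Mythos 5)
Your proposal is correct and follows essentially the same route as the paper: take $M=\operatorname{lcm}(1,\dots,C)$ with $C$ from Lemma \ref{boundedOrderLemma}, note that every root of $g(x)\bmod q$ then satisfies $r^{M}\equiv 1\pmod q$ for the infinitely many primes $q$ produced there, and apply Lemma \ref{rootimplication} with $h=g$ and $x^{M}-1$ in the role of the dividend. The extra care you take (ensuring $q\nmid g(0)$ so that orders are defined, and that ``order $\le C$'' upgrades to ``order divides $M$'') is implicit in the paper's argument and harmless.
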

\begin{proof}
Let $M$ be the least common multiple of all positive integers less than or equal to $C$. By Lemma \ref{boundedOrderLemma} there are then infinitely many primes $q$ for which $g(x)$ modulo $q$ splits completely and any root, $r$, of $g(x) \mod q$ satisfies $r^{M}-1 \equiv 0 \pmod{q}$. Thus, we are in a situation where we can apply Lemma \ref{rootimplication} to obtain that $g(x) \mid (x^{M}-1)$ in $\mathbb{Z}[x]$.
\end{proof}
Using the identity 
\[x^{M}-1 = \prod_{d \mid M}\Phi_{d}(x) \]
we now easily obtain that $g(x)$ is the product of certain cyclotomic polynomials and we thereby get that $f(x)$ has the form of (\ref{primeFac}), as wanted.

Now to see that all polynomials of the form (\ref{primeFac}) satisfy (\ref{divisorprop}) it clearly suffices to check that for every $i=1,2,\dots, m$,
\[ \Phi_{d_{i}}(p) \mid \Phi_{d_{i}}(p^{p}), \text{ for all primes }p \geq \max\{ d_{1},d_{2},\dots, d_{m}  \} +1.
\]
This, however, follows directly from Lemma \ref{sufLemma}, concluding the proof of Theorem \ref{main1}.
\end{proof}

\begin{proof}[Proof of Theorem \ref{oneRoot}]
We start by showing that any monic polynomial \newline $f(x) \in \mathbb{Z}[x]$ that satisfies (\ref{divisorprop}) with $N=2$, and that is not a power of $x$, must be divisible by $x-1$.

Let
\[
 f(x) = x^{e_{0}}\Phi_{d_{1}}(x)^{e_{1}} \Phi_{d_{2}}(x)^{e_{2}} \cdots \Phi_{d_{m}}(x)^{e_{m}}
\]
satisfy (\ref{divisorprop}) with $N=2$ and with $f(x)$ not a power of $x$. As $(\Phi_{d}(n),n) = 1$ for all $d$ and $n$ we must also have that $h(x):= f(x)/x^{e_{0}}$ satisfies (\ref{divisorprop}) with $N=2$.

Let $\mathcal{A} = \{ d_{1},d_{2},\dots ,d_{m}  \}$. We then have to show that $1 \in \mathcal{A}$. Let
\[M:= \text{lcm}(d_{1},d_{2},\dots , d_{m}) := p_{1}^{a_{1}}p_{2}^{a_{2}}\cdots p_{k}^{a_{k}} \]
where
\[
p_{1} < p_{2}< \cdots < p_{k}.
\]
\begin{lemma} \label{reductionLemma}
Let $d \in \mathcal{A}$ for which $p_{k} \mid d$. If $p_{i} \mid d$ and $(p_{i},d) \neq (2,6)$ then also $d/p_{i} \in \mathcal{A}$.
\end{lemma}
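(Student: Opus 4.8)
The plan is to use the divisibility relation at the single prime $p=p_i$, combined with the cyclotomic identities of Lemma \ref{sufLemma} and the large primitive prime divisor furnished by Lemma \ref{largePrimitive}. The guiding idea: produce a prime $q \mid \Phi_d(p_i)$ that is so large it cannot divide $M$; tracking $q$ through the relation $h(p_i) \mid h(p_i^{p_i})$ will then pin down $d/p_i$ as an element of $\mathcal{A}$.

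First I would record that $p_i \mid d$ and $p_k \mid d$ give $d \ge p_i \ge 2$ and $d \ge p_k$. Next I would check that the pair $(p_i,d)$ avoids all three exceptional cases in Lemma \ref{largePrimitive}: $(2,1)$ cannot occur since $p_i \mid d$ would force $2 \mid 1$; $(2,6)$ is excluded by hypothesis; and $(2^{a}-1,2)$ would force $d=2$ and $p_i \mid 2$, hence $p_i=2$, which is not of the form $2^{a}-1$ with $a \ge 2$. Hence Lemma \ref{largePrimitive} yields a prime $q>d$ with $q \mid \Phi_d(p_i)$. Since $\Phi_d$ is one of the cyclotomic factors of $h(x)=f(x)/x^{e_0}$, we get $q \mid \Phi_d(p_i) \mid h(p_i)$, and since $h$ satisfies (\ref{divisorprop}) with $N=2$ (as noted above, using $(\Phi_d(n),n)=1$) and $p_i \ge 2$, we obtain $q \mid h(p_i) \mid h(p_i^{p_i})$.

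Now I would expand $h(p_i^{p_i})=\prod_j \Phi_{d_j}(p_i^{p_i})^{e_j}$ and apply part (1) of Lemma \ref{sufLemma} with base $p_i$ and prime $p_i$: each factor $\Phi_{d_j}(p_i^{p_i})$ equals $\Phi_{p_i d_j}(p_i)$ when $p_i \mid d_j$, and $\Phi_{p_i d_j}(p_i)\,\Phi_{d_j}(p_i)$ when $p_i \nmid d_j$. Since $q$ is prime it divides one of these factors, so either (a) $q \mid \Phi_{p_i d_j}(p_i)$ for some $j$, or (b) $q \mid \Phi_{d_j}(p_i)$ for some $j$ with $p_i \nmid d_j$. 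In both cases $q$ divides a greatest common divisor $(\Phi_n(p_i),\Phi_d(p_i))$ with $n \in \{d_j,\,p_i d_j\}$ and common base $p_i \ge 2$, so part (2) of Lemma \ref{sufLemma} forces one of $n/d$, $d/n$ to be a power of $q$.

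The decisive observation is that $q>d\ge p_k$, so $q$ exceeds every prime factor of $M=\text{lcm}(d_1,\dots,d_m)$, and since each $d_j \mid M$ no $d_j$ can be divisible by $q$. I would then run through the options: if $d$ equals $q^{a}$ times $d_j$ or $p_i d_j$ with $a\ge 1$, then $d \ge q > d$, absurd; if $n=q^{a}d$ with $a\ge1$, then $q \mid n$, hence (as $q\ne p_i$) $q \mid d_j \mid M$, contradicting $q>p_k$; and in case (b) with $d_j=d$ we would have $p_i \nmid d=d_j$, contradicting $p_i\mid d$. The only surviving possibility is case (a) with exponent $0$, namely $p_i d_j=d$, giving $d/p_i=d_j\in\mathcal{A}$. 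I expect the only delicate points to be the bookkeeping of the Zsigmondy exceptions and keeping the base equal to $p_i$ throughout so that Lemma \ref{sufLemma}(2) applies; once $q>p_k$ is secured, every bad branch collapses at once.
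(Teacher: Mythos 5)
Your proof is correct and follows essentially the same route as the paper: both invoke Lemma \ref{largePrimitive} to get a prime $q>d\geq p_k$ dividing $\Phi_d(p_i)$, push it through the divisibility at $p=p_i$, and use Lemma \ref{sufLemma} together with $q>p_k$ to force the index to be exactly $d/p_i$. The only (cosmetic) difference is that you apply Lemma \ref{sufLemma}(1) to decompose $\Phi_{d_j}(p_i^{p_i})$ into cyclotomic values at base $p_i$ and use part (2) with $b=p_i$, whereas the paper lifts $\Phi_d(p_i)$ into $\Phi_{d/p_i}(p_i^{p_i})$ and applies part (2) with $b=p_i^{p_i}$.
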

\begin{proof}
According to Lemma \ref{largePrimitive} $\Phi_{d}(p_{i})$ must have a prime factor, $p$, satisfying $p >d \geq p_{k}$. As (\ref{divisorprop}), in particular, holds for $p_{i}$ there is an index $j$ such that $p \mid \Phi_{d_{j}}(p_{i}^{p_{i}})$. By Lemma \ref{sufLemma} (1) we have $p \mid \Phi_{d/p_{i}}(p_{i}^{p_{i}})$ and Lemma \ref{sufLemma} (2) then gives that $d_{j}/(d/p_{i})$ is a power of $p$. But as $p>p_{k}$ this means that $d_{j} = d/p_{i}$, proving the lemma.
\end{proof}

Starting with an element $d \in \mathcal{A}$ that is divisible by $p_{k}$ we may repeatedly apply Lemma \ref{reductionLemma} to peel off prime factors of $d$, in each step replacing $d$ by $d/p_{i}$, where $p_{i}$ is the least prime factor of $d$. Continuing this process as long as possible we obtain either that $1 \in \mathcal{A}$ or that $6 \in \mathcal{A}$ and $p_{k} = 3$.

In the first case we are done, so assume that $6 \in \mathcal{A}$ and $p_{k} = 3$. As $\Phi_{6}(2) = 3$ there must be some $d_{j} \in \mathcal{A}$ with $3 \mid \Phi_{d_{j}}(2^{2})$. Since $3 \mid \Phi_{1}(4)$, $d_{j}$ must by Lemma \ref{sufLemma} (2) be a power of 3. Putting $d = d_{j}: = 3^{a}$ we may again repeatedly apply Lemma \ref{reductionLemma}, ultimately leading to $1 \in \mathcal{A}$.

For the second statement let $d_{2} = p_{2},~d_{3} = p_{3}, \dots, d_{m} = p_{m}$ be distinct primes. The first statement in Lemma \ref{sufLemma} implies that $\Phi_{p_{j}}(p) \mid \Phi_{p_{j}}(p^{p})$ for any prime $p \neq p_{j}$. Therefore we only need to worry about $p = p_{j}$ for some $j$. However, $\Phi_{p_{j}}(p_{j})\Phi_{1}(p_{j}) = p_{j}^{p_{j}}-1 = \Phi_{1}(p_{j}^{p_{j}})$, which settles the case $p=p_{j}$.
\end{proof}

\subsection*{Acknowledgements}
The author thanks the anonymous referee for providing valuable insights and giving detailed comments.
The author also thanks his supervisor, Christian Elsholtz, for commenting on a preprint of the paper.  The author acknowledges the support of the Austrian Science Fund (FWF): W1230.

\normalsize

\end{document}